\newtheorem{theorem}{Theorem}
\newtheorem{proposition}[theorem]{Proposition}
\newtheorem{remark}[theorem]{Remark}
\newenvironment{proof}[1][Proof]{\noindent\textbf{#1.} }{\ \rule{0.5em}{0.5em}}
\begin{document}

\title{Note on a Lyapunov-type inequality for a fractional boundary value
problem with Caputo-Fabrizio derivative }
\author{Zaid Laadjal \\ \\
Department of Mathematics, ICOSI Laboratory,\\
University of Abbes Laghrour, Khenchela, 40000, Algeria.\\
E-mail: zaid.laadjal@yahoo.com }

\date{}
\maketitle

\begin{abstract}
In this short note, we present a Lyapunov-type inequality that corrects the 
recently obtained result in [M. Kirane, B. T. Torebek: A Lyapunov-type 
inequality for a fractional boundary value problem with Caputo-Fabrizio 
derivative, J. Math. Inequal. 12, 4 (2018), 1005--1012].
\end{abstract}

\textbf{Keywords: }Caputo-Fabrizio derivative, Lyapunov-type inequality,
boundary value problem.

\textbf{MSC (2010):} 34A08, 34A40, 26A33.

\section{Introduction}

Recently, in \cite{Kirane} the authors discussed a Lyapunov-type inequality
for the following linear fractional boundary value problem:%
\begin{equation}
\left\{ 
\begin{array}{cc}
^{CF}D_{a}^{\alpha }u(t)+q(t)u(t)=0, & 0\leq a<t<b, \\ 
u(a)=u(b)=0, & 
\end{array}%
\right.  \label{1}
\end{equation}%
where $^{CF}D_a^{\alpha }$ denotes the Caputo-Fabrizio derivative \cite%
{Fabrizio,Losada} of order $\alpha,(1<\alpha \leq 2),$ $q:[a,b]\rightarrow 
\mathbb{R}
$ is a continuous function$.$ And they included the following result:

\begin{theorem}[\protect\cite{Kirane}]
\label{Th1} If the fractional boundary value problem (\ref{1}) has a 
nontrivial solution, then  
\begin{equation}
\int_{a}^{b}|q(t)|ds>\frac{4(\alpha -1)(b-a)}{[(\alpha -1)(b-a)-2+\alpha
]^{2}}.  \label{5}
\end{equation}
\end{theorem}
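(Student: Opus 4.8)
The plan is to run the standard Lyapunov strategy: convert the boundary value problem into an equivalent integral equation with an explicit kernel (a Green's function), bound that kernel, and read off inequality (\ref{5}) from the reciprocal of the bound.

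First I would invert the Caputo--Fabrizio operator. Setting $h(t)=-q(t)u(t)$, equation (\ref{1}) reads ${}^{CF}D_a^{\alpha}u=h$, and for $1<\alpha\le 2$ the associated Caputo--Fabrizio integral of order $\alpha$ decomposes into a nonlocal (Volterra) part of weight $\alpha-1$ and a purely local part of weight $2-\alpha$. Carrying out the inversion and absorbing the two integration constants, I expect a representation of the form
\begin{equation*}
u(t)=c_1(t-a)+(2-\alpha)\int_a^t h(s)\,ds+(\alpha-1)\int_a^t (t-s)\,h(s)\,ds ,
\end{equation*}
where $u(a)=0$ has already been used to kill the constant term. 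The second condition $u(b)=0$ fixes $c_1$, and because $h(a)=-q(a)u(a)=0$ the contribution at $s=a$ drops out cleanly. Substituting $c_1$ back gives $u(t)=\int_a^b G(t,s)h(s)\,ds=-\int_a^b G(t,s)q(s)u(s)\,ds$ for an explicit kernel $G$ that is piecewise defined on $\{s\le t\}$ and $\{s>t\}$.

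Next I would estimate at the maximum. Let $c\in(a,b)$ be a point with $|u(c)|=\max_{[a,b]}|u|=:\|u\|_\infty$; such a point exists and $\|u\|_\infty>0$, since the solution is nontrivial and vanishes at both endpoints. Then
\begin{equation*}
\|u\|_\infty=|u(c)|\le\int_a^b |G(c,s)|\,|q(s)|\,|u(s)|\,ds\le\|u\|_\infty\Big(\max_{a\le t,s\le b}|G(t,s)|\Big)\int_a^b|q(s)|\,ds ,
\end{equation*}
and dividing by $\|u\|_\infty>0$ yields $1\le\big(\max|G|\big)\int_a^b|q(s)|\,ds$, i.e. $\int_a^b|q(s)|\,ds\ge 1/\max|G|$.

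It remains to compute $\max|G|$, and this is the crux. Because the local part of the Caputo--Fabrizio derivative makes $G$ jump across the diagonal $s=t$, the extremum is attained on that diagonal, which collapses the two-variable maximization to a one-variable problem for a downward parabola. Concretely I expect the bound to reduce to
\begin{equation*}
\max_{a\le t,s\le b}|G(t,s)|=\frac{1}{b-a}\max_{a\le s\le b}(s-a)\big[(\alpha-1)(b-s)-(2-\alpha)\big]=\frac{[(\alpha-1)(b-a)-(2-\alpha)]^2}{4(\alpha-1)(b-a)} ,
\end{equation*}
the last equality by completing the square in $w=s-a$. Taking reciprocals produces exactly the right-hand side of (\ref{5}), the strict inequality coming from the observation that equality in the kernel estimate would force $|u|\equiv\|u\|_\infty$ together with $G$ attaining its maximum for almost every $s$, which is incompatible with $u(a)=u(b)=0$. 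The delicate point throughout is the exact form of $G$---in particular the sign with which the local weight $2-\alpha$ enters the kernel on $\{s>t\}$---since this is precisely what fixes the $-(2-\alpha)$ inside the squared denominator; handling this sign and the diagonal maximization correctly is where the argument must be carried out with care.
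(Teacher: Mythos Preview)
Your overall strategy---integral representation, sup-norm argument, reciprocal of $\max|G|$---is the standard route and matches what Kirane--Torebek did. But the paper you are working in does \emph{not} prove Theorem~\ref{Th1}: its whole purpose is to show that Theorem~\ref{Th1} is false as stated (the denominator of (\ref{5}) vanishes when $b-a=\tfrac{2-\alpha}{\alpha-1}$), and the error lies precisely in the step you flag as ``the crux'', the computation of $\max_{[a,b]^2}|G|$.

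There are two concrete failures in your maximization. First, completing the square locates the interior critical point $s^\ast=\tfrac12\big(a+b+\tfrac{2-\alpha}{\alpha-1}\big)$, but $s^\ast\notin[a,b]$ whenever $b-a<\tfrac{2-\alpha}{\alpha-1}$; in that regime the maximum is attained at an endpoint and equals $2-\alpha$, not the vertex value. Second, even when $s^\ast\in[a,b]$, you have only examined one branch of the diagonal restriction. From (\ref{3}) the two traces are $h_1(s)=\tfrac{b-s}{b-a}[(\alpha-1)(s-a)-(2-\alpha)]$ and $h_2(s)=\tfrac{s-a}{b-a}[(\alpha-1)(b-s)+(2-\alpha)]$; note that $h_1$ changes sign at $s=a+\tfrac{2-\alpha}{\alpha-1}$, so $|h_1|$ is not a single downward parabola, and in any case $h_2(s^\ast)=\tfrac{[(\alpha-1)(b-a)+(2-\alpha)]^2}{4(\alpha-1)(b-a)}$ dominates both $h_1(s^\ast)$ and $2-\alpha$. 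The correct bound therefore carries a $+(2-\alpha)$ inside the square, not the $-(2-\alpha)$ you obtained, and the resulting Lyapunov inequality is (\ref{27}) of Theorem~\ref{Th2}, not (\ref{5}). Your closing caveat about ``the sign with which the local weight $2-\alpha$ enters'' pinpoints exactly where the argument breaks; you guessed the wrong sign and thereby reproduced the original mistake.
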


We\ have noticed that, the denominator in the inequality (\ref{5}) is equal
zero when $b-a=\frac{2-\alpha }{\alpha -1}.$ So a mistake has been occured
during the previous result and some other results (Corrolary 3.4 and
Corrolary 3.5 in \cite{Kirane}) are also incorrect. These mistakes come from
the main wrong in (Lemma 3.2 in \cite{Kirane} related to the calculations
for the maximum value of the Green's function of the problem (\ref{1}).

This work aims to show these mistakes and present the correct version of
them.

We will also refer the interested reader in studying the Lyapunop-type
inequalities for a fractional boundary value problems to the works compiled
in chapter in the book \cite{Ntouyas}, as well as some other papers
published recently, for example see \cite{Jleli1}--\cite{Jleli3} and the
references cited therein.

\section{Main results}

\label{Sec:2}

The fractional boundary value problem (\ref{1}) is equivalent to the
integral equation 
\begin{equation}
u(s)=\int_{a}^{b}G(t,s)q(s)u(s)ds,  \label{int_eq}
\end{equation}
where $G(t,s)$ is called the Green's function of the problem (\ref{1}) and
it's difened by%
\begin{equation}
G(t,s)=\left\{ 
\begin{array}{lll}
g_{1}(t,s)=\frac{b-t}{b-a}[(\alpha -1)(s-a)-2+\alpha ], &  & a\leq s\leq
t\leq b, \\ 
&  &  \\ 
g_{2}(t,s)=\frac{t-a}{b-a}[(\alpha -1)(b-s)+2-\alpha ], &  & a\leq t\leq
s\leq b.%
\end{array}%
\right.  \label{3}
\end{equation}%
See \cite{Kirane} for more details, (note that this function in (Lemma 3.1, 
\cite{Kirane}) is written in wrong way. Although its proof is true).

The mistake alluded to in (\cite{Kirane}, Section 3) is that the authors
concluded that the maximum value of the function $G(t,s)$ is obtained at the
point 
\begin{equation}
t=s=\frac{1}{2}\left( b+a+\frac{2-\alpha }{\alpha -1}\right) :=s^{\ast },
\label{s*}
\end{equation}%
\ where $s^{\ast }$\ is defined by (equality (3.5) in \cite{Kirane}).
However, this is wrong for $(t,s)\in \lbrack a,b]\times \lbrack a,b]$ with $%
1<\alpha \leq 2$, as we will show nextly.

Let us start to discuss the previous value of $s^{\ast }$.\newline
Note that, if $b-a<\frac{2-\alpha }{\alpha -1},$ we have%
\begin{eqnarray*}
&& 
\begin{array}{lll}
b-a<\frac{2-\alpha }{\alpha -1} & \Longleftrightarrow & 2b<b+a+\frac{
2-\alpha }{\alpha -1}%
\end{array}
\\
&& 
\begin{array}{lll}
\text{ \ \ \ \ \ \ \ \ \ \ \ \ \ \ \ \ \ \ \ } & \Longleftrightarrow & b< 
\frac{1}{2}\left( b+a+\frac{2-\alpha }{\alpha -1}\right)%
\end{array}
\\
&& 
\begin{array}{lll}
\text{ \ \ \ \ \ \ \ \ \ \ \ \ \ \ \ \ \ \ \ } & \Longleftrightarrow & 
b<s^{\ast },%
\end{array}%
\end{eqnarray*}%
thus $s^{\ast }\notin \lbrack a,b].$ Then the maximum value of the function $%
G(t,s)$ is not at $s^{\ast }$ when $b-a<$ $\frac{2-\alpha }{\alpha -1}.$%
\newline
Now, for $a\leq s\leq t\leq b,$ with $b-a<\frac{2-\alpha }{\alpha -1},$\ we
have 
\begin{eqnarray}
&&%
\begin{array}{ccc}
b-a<\frac{2-\alpha }{\alpha -1} & \Longleftrightarrow & b<a+\frac{2-\alpha }{
\alpha -1}%
\end{array}
\notag \\
&&%
\begin{array}{ccc}
\text{ \ \ \ \ \ \ \ \ \ \ \ \ \ \ \ \ \ \ \ } & \Longrightarrow & s<a+\frac{
2-\alpha }{\alpha -1}%
\end{array}
\notag \\
&&%
\begin{array}{ccc}
\text{ \ \ \ \ \ \ \ \ \ \ \ \ \ \ \ \ \ \ \ } & \Longrightarrow & (\alpha
-1)(s-a)-2+\alpha <0,%
\end{array}
\label{6.a}
\end{eqnarray}%
on other hand, we have 
\begin{equation}
\frac{b-t}{b-a}\leq \frac{b-s}{b-a}.  \label{7}
\end{equation}%
By the inequalities (\ref{6.a}) and (\ref{7}) we get 
\begin{equation}
g_{1}(t,s)\geq \frac{b-s}{b-a}[(\alpha -1)(s-a)-2+\alpha ],\text{ \ }a\leq
s\leq b<a+\frac{2-\alpha }{\alpha -1}.  \label{9}
\end{equation}%
Observe that the inequality (\ref{9}) is contrary to (the inequality (3.4)
in \cite{Kirane}).

\begin{remark}
Note that on a general interval $[a,b],0\leq a<b,$ we have  
\begin{equation}
h_{1}(s)\leq g_{1}(t,s)\leq 0,\text{ for\ }a\leq s\leq t\leq a+\frac{
2-\alpha }{\alpha -1},  \label{10}
\end{equation}
where the function $h_{1}$ is defined by  
\begin{equation}
h_{1}(s)=g_{1}(s,s)=\frac{b-s}{b-a}[(\alpha -1)(s-a)-2+\alpha ],\text{ \ }
s\in \lbrack a,b]  \label{h1}
\end{equation}
\end{remark}

We differentiate the function $h_{1}(s)$ to get 
\begin{equation}
h_{1}^{\prime }(s)=-\frac{2(\alpha -1)}{b-a}s+\frac{(\alpha -1)\left(
b+a\right) +2-\alpha }{b-a}.  \label{16}
\end{equation}%
We have $s^{\ast }$ is the unique solution of the equation $h_{1}^{\prime
}(s)=0,$ where $s^{\ast }$\ is given by (\ref{s*}) but the value of $s^{\ast
}$\ in some cases does not belong to the interval $[a,b]$ as we have shown
previously.

By the discussion above, we can conclude that the maximum value of the
function $G(t,s)$\ lays in the following two cases:

\textbf{Case 1. }$b-a<$\textbf{\ }$\frac{2-\alpha }{\alpha -1}.$

Because $b-a<$ $\frac{2-\alpha }{\alpha -1},$ so then $s\leq b<s^{\ast }$
(here $a+\frac{2-\alpha }{\alpha -1},s^{\ast }\notin \lbrack a,b])$, we
obtain $h_{1}^{\prime }(s)\geq $ $0$ and $h_{1}(s)\leq 0$ for $s\leq b<a+%
\frac{2-\alpha }{\alpha -1}.$ So by (\ref{10}) and the contunuity of the
function $h_{1}$ we conclude that 
\begin{eqnarray}
\max_{a\leq s\leq t\leq b<a+\frac{2-\alpha }{\alpha -1}}\left\vert
g_{1}(t,s)\right\vert &=&\max_{a\leq s\leq b<a+\frac{2-\alpha }{\alpha -1}%
}\left\vert h_{1}(s)\right\vert  \notag \\
&=&-h_{1}(a)  \notag \\
&=&2-\alpha .  \label{21}
\end{eqnarray}

Next, for $a\leq t\leq s\leq b.$ Obviously, 
\begin{equation}
0\leq g_{2}(t,s)\leq \frac{s-a}{b-a}[(\alpha -1)(b-s)+2-\alpha].\text{ }
\label{12}
\end{equation}%
We define a function $h_{2}$ by%
\begin{equation}
h_{2}(s)=g_{2}(s,s)=\frac{s-a}{b-a}[(\alpha -1)(b-s)+2-\alpha ],\ a\leq
s\leq b.  \label{h2}
\end{equation}%
Differentiating the function $h_{2}(s)$%
\begin{equation}
h_{2}^{\prime }(s)=-\frac{2(\alpha -1)}{b-a}s+\frac{(\alpha -1)\left(
a+b\right) +\left( 2-\alpha \right) }{b-a},  \label{13}
\end{equation}%
which implies that the function $h_{2}^{\prime }(s)$ has a unique zero, at
the point $s^{\ast },$ but $s^{\ast }>b$ (i.e. $s^{\ast }\notin \lbrack a,b]$%
)$.$ Because $h_{2}^{\prime }(s)>0$ for all $s\in \lbrack a,b]$ and $%
g_{2}(t,s)\geq 0$ for $a\leq t\leq s\leq b,$ and $h_{2}(s)$ is continuous
function, then 
\begin{eqnarray}
\underset{a\leq t\leq s\leq b<a+\frac{2-\alpha }{\alpha -1}}{\max }%
\left\vert g_{2}(t,s)\right\vert &=&\max_{a\leq s\leq b<a+\frac{2-\alpha }{
\alpha -1}}h_{2}(s)  \notag \\
&=&h_{2}(b)  \notag \\
&=&2-\alpha .  \label{15}
\end{eqnarray}%
By (\ref{15}) and (\ref{21}) we get 
\begin{equation}
\underset{a\leq t,s\leq b<a+\frac{2-\alpha }{\alpha -1}}{\max }\left\vert
G(t,s)\right\vert =2-\alpha .  \label{22}
\end{equation}%

\textbf{Case 2.} $b-a\geq $ $\frac{2-\alpha }{\alpha -1}.$

From the inequality $b-a\geq $ $\frac{2-\alpha }{\alpha -1}$ we get $a+\frac{
2-\alpha }{\alpha -1}\in \lbrack a,b]$ and $s^{\ast }\in \lbrack a+\frac{
2-\alpha }{\alpha -1},b],$ we obtain%
\begin{equation}
\left\{ 
\begin{array}{l}
0\leq g_{1}(t,s)\leq h_{1}(s),\text{ \ }a+\frac{2-\alpha }{\alpha -1}\leq
s\leq b. \\ 
\\ 
h_{1}(s)\leq g_{1}(t,s)\leq 0,\text{ \ }a\leq s\leq a+\frac{2-\alpha }{
\alpha -1}<s^{\ast }.%
\end{array}%
\right.  \label{22.1}
\end{equation}%
Because $h_{1}(s)$ is continuous function, and $h_{1}(a+\frac{2-\alpha }{
\alpha -1})=h_{1}(b)=0,$ and $h_{1}(a)=-(2-\alpha ),$ we conclude that 
\begin{eqnarray}
\max_{a\leq s\leq t\leq a+\frac{2-\alpha }{\alpha -1}\leq b}\left\vert
g_{1}(t,s)\right\vert &=&\max_{a\leq s\leq a+\frac{2-\alpha }{\alpha -1}\leq
b}\left\vert h_{1}(s)\right\vert  \notag \\
&=&\max \left\{ -h_{1}(a),h_{1}(s^{\ast })\right\}  \notag \\
&=&\max \left\{ 2-\alpha ,\frac{[(\alpha -1)(b-a)-2+\alpha ]^{2}}{4(\alpha
-1)(b-a)}\right\} .  \label{23}
\end{eqnarray}%
On other hand, we have%
\begin{eqnarray}
\underset{a\leq t\leq s\leq a+\frac{2-\alpha }{\alpha -1}\leq b}{\max }%
\left\vert g_{2}(t,s)\right\vert &=&\max_{a\leq s\leq a+\frac{2-\alpha }{
\alpha -1}\leq b}h_{2}(s)  \notag \\
&=&h_{2}(s^{\ast })  \notag \\
&=&\frac{\left[ (\alpha -1)\left( b-a\right) +\left( 2-\alpha \right) \right]
^{2}}{4(\alpha -1)\left( b-a\right) }.  \label{23.2}
\end{eqnarray}%
By (\ref{23}) and (\ref{23.2}) we get 
\begin{eqnarray*}
\max_{a\leq t,s\leq a+\frac{2-\alpha }{\alpha -1}\leq b}\left\vert
G(t,s)\right\vert &=&\max \left\{ 2-\alpha ,\frac{[(\alpha -1)(b-a)-\left(
2-\alpha \right) ]^{2}}{4(\alpha -1)(b-a)},\right. \\
&&\text{\ \ \ \ \ \ \ \ \ \ \ \ }\left. \frac{[(\alpha -1)(b-a)+\left(
2-\alpha \right) ]^{2}}{4(\alpha -1)(b-a)}\right\} \\
&=&\max \left\{ 2-\alpha ,\frac{[(\alpha -1)(b-a)+\left( 2-\alpha \right)
]^{2}}{4(\alpha -1)(b-a)}\right\} ,
\end{eqnarray*}%
using the inequality $\frac{(A+B)^{2}}{4}\geq AB,$ with $A=(\alpha -1)(b-a)$
and $B=\left( 2-\alpha \right) ,$ we obtain 
\begin{equation}
\max_{a\leq t,s\leq a+\frac{2-\alpha }{\alpha -1}\leq b}\left\vert
G(t,s)\right\vert =\frac{[(\alpha -1)(b-a)+\left( 2-\alpha \right) ]^{2}}{
4(\alpha -1)(b-a)}.  \label{24.2}
\end{equation}

Thus we conclude the follwing result.

\begin{proposition}
\label{Prop1} The Green's function $G$ defined by (\ref{3}), has the 
following properties: \newline
$i\mathbf{).}$ If $b-a<$ $\frac{2-\alpha }{\alpha -1},$ then  
\begin{equation}
\underset{(t,s)\in \lbrack a,b]\times \lbrack a,b]}{\max }\left\vert
G(t,s)\right\vert =2-\alpha ,  \label{25}
\end{equation}
$ii).$ If $b-a\geq $ $\frac{2-\alpha }{\alpha -1},$ then  
\begin{equation}
\max_{(t,s)\in \lbrack a,b]\times \lbrack a,b]}\left\vert G(t,s)\right\vert
= \frac{[(\alpha -1)(b-a)+\left( 2-\alpha \right) ]^{2}}{4(\alpha -1)(b-a)}.
\label{26}
\end{equation}
\end{proposition}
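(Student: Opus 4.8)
The plan is to establish the two cases of Proposition~\ref{Prop1} by analyzing the two branches $g_1$ and $g_2$ of the Green's function separately, computing $\max_{(t,s)}|g_i(t,s)|$ for each, and then taking the larger of the two. The key structural observation, already prepared in the excerpt, is that for each fixed $s$ the map $t\mapsto g_i(t,s)$ is affine (hence monotone), so its extremum over the admissible range of $t$ is attained at an endpoint; on the diagonal $t=s$ this reduces the two-variable optimization to maximizing the one-variable functions $h_1(s)=g_1(s,s)$ and $h_2(s)=g_2(s,s)$ over $s\in[a,b]$. Both $h_1$ and $h_2$ are downward-opening parabolas in $s$ with the same vertex $s^\ast=\tfrac12\bigl(b+a+\tfrac{2-\alpha}{\alpha-1}\bigr)$, so everything hinges on whether $s^\ast$ and the companion point $a+\tfrac{2-\alpha}{\alpha-1}$ lie inside $[a,b]$, which is exactly the dichotomy $b-a<\tfrac{2-\alpha}{\alpha-1}$ versus $b-a\ge\tfrac{2-\alpha}{\alpha-1}$.

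For part $i)$, when $b-a<\tfrac{2-\alpha}{\alpha-1}$ one has $b<s^\ast$, so on $[a,b]$ both $h_1'$ and $h_2'$ are positive; thus $h_1$ and $h_2$ are increasing. For the lower branch, $h_1\le 0$ throughout (by \eqref{10} and the sign computation in \eqref{6.a}), and since $h_1$ is increasing its minimum is $h_1(a)=-(2-\alpha)$, giving $\max|g_1|=\max|h_1|=-h_1(a)=2-\alpha$ as in \eqref{21}. For the upper branch, $g_2\ge 0$ and $h_2$ is increasing, so its maximum is $h_2(b)=2-\alpha$ as in \eqref{15}. Comparing the two yields $\max_{(t,s)}|G(t,s)|=2-\alpha$, which is \eqref{25}. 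Here I would be careful to justify the reduction to the diagonal: for $g_1$ with $s$ fixed, $t$ ranges over $[s,b]$ and the factor $\tfrac{b-t}{b-a}$ is maximized at $t=s$ while the bracket $[(\alpha-1)(s-a)-2+\alpha]$ is negative, so $|g_1(t,s)|$ is maximized at $t=s$; symmetrically for $g_2$ the factor $\tfrac{t-a}{b-a}$ is maximized at $t=s$ (since $t$ ranges over $[a,s]$) and the bracket is positive.

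For part $ii)$, when $b-a\ge\tfrac{2-\alpha}{\alpha-1}$ both $a+\tfrac{2-\alpha}{\alpha-1}$ and $s^\ast$ lie in $[a,b]$. The lower branch $g_1$ is nonzero only for $a\le s\le a+\tfrac{2-\alpha}{\alpha-1}$, where $h_1\le 0$; the quadratic $h_1$ on this subinterval has its minimum either at the left endpoint $s=a$, giving $-h_1(a)=2-\alpha$, or—if $s^\ast$ falls inside this subinterval—at $s^\ast$, giving $|h_1(s^\ast)|=\tfrac{[(\alpha-1)(b-a)-(2-\alpha)]^2}{4(\alpha-1)(b-a)}$; so $\max|g_1|$ is the larger of these, as in \eqref{23}. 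The upper branch $g_2\ge 0$ is maximized on the diagonal at the vertex $s^\ast\in[a,b]$, yielding $h_2(s^\ast)=\tfrac{[(\alpha-1)(b-a)+(2-\alpha)]^2}{4(\alpha-1)(b-a)}$ as in \eqref{23.2}. Taking the maximum over all three candidate values and using the AM–GM-type bound $\tfrac{(A+B)^2}{4}\ge AB$ with $A=(\alpha-1)(b-a)$, $B=2-\alpha$ (which dominates both $2-\alpha=B$ trivially and $\tfrac{(A-B)^2}{4(\alpha-1)(b-a)}$ since $(A+B)^2\ge(A-B)^2$) collapses the maximum to the single value in \eqref{26}.

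The main obstacle I anticipate is not any single computation but the bookkeeping of signs and sub-intervals: making sure that the reduction $\max_t|g_i(t,s)|=|h_i(s)|$ is correctly justified on the right $t$-range in each case, that the sub-interval $[a,a+\tfrac{2-\alpha}{\alpha-1}]$ on which $g_1$ is relevant is handled separately from its complement (where $h_1\ge 0$ but the earlier sign analysis shows $g_1$ contributes nothing beyond what $g_2$ already gives), and that in Case~2 one genuinely needs to carry the three-way maximum through to the end before the elementary inequality eliminates the spurious candidates. Once the monotonicity/vertex structure of the affine-in-$t$, quadratic-on-the-diagonal Green's function is in hand, the rest is the routine endpoint evaluation already displayed in \eqref{21}--\eqref{24.2}.
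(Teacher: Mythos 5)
Your proposal is correct and follows essentially the same route as the paper: reduce each branch to the diagonal functions $h_1(s)=g_1(s,s)$ and $h_2(s)=g_2(s,s)$ via the monotone affine dependence on $t$ together with the sign of the bracket, split according to whether $s^{\ast}$ and $a+\frac{2-\alpha}{\alpha-1}$ lie in $[a,b]$, and in Case 2 compare the three candidates $2-\alpha$, $h_1(s^{\ast})$, $h_2(s^{\ast})$ using $\frac{(A+B)^2}{4}\geq AB$, exactly as in (\ref{21})--(\ref{24.2}). The only slips are cosmetic and do not affect the result: in Case 2 the branch $g_1$ is not ``nonzero only for $a\leq s\leq a+\frac{2-\alpha}{\alpha-1}$'' (it is positive on the complementary subinterval $[a+\frac{2-\alpha}{\alpha-1},b]$, which is precisely where the candidate $h_1(s^{\ast})$ arises), and $s^{\ast}$ never falls inside $[a,a+\frac{2-\alpha}{\alpha-1}]$ there, so the minimum of $h_1$ on that subinterval is always at $s=a$.
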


Hence we have the following Lyapunov-type inequality.

\begin{theorem}
\label{Th2} If the fractional boundary value problem (\ref{1}) has a 
nontrivial solution. Then  
\begin{equation}
\int_{a}^{b}|q(t)|ds\geq \left\{ 
\begin{array}{l}
\frac{1}{2-\alpha .},\text{ \ if \ }b-a<\frac{2-\alpha }{\alpha -1}, \\ 
\\ 
\frac{4(\alpha -1)(b-a)}{[(\alpha -1)(b-a)+\left( 2-\alpha \right) ]^{2}}, 
\text{ \ if \ }b-a\geq \frac{2-\alpha }{\alpha -1}.%
\end{array}
\right.  \label{27}
\end{equation}
\end{theorem}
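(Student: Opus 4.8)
The plan is to run the classical Lyapunov argument: feed the sup-norm of a nontrivial solution into the integral representation (\ref{int_eq}) and bound the kernel by its maximum over the square $[a,b]\times[a,b]$, which is exactly the quantity computed in Proposition \ref{Prop1}.

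First I would take $u$ to be a nontrivial solution of (\ref{1}). Since every solution of (\ref{1}) satisfies the integral equation $u(t)=\int_{a}^{b}G(t,s)q(s)u(s)\,ds$ and $q$ is continuous, $u$ is continuous on $[a,b]$, so $\|u\|_{\infty}:=\max_{t\in[a,b]}|u(t)|$ is attained at some $t_{0}\in[a,b]$, and $\|u\|_{\infty}>0$ because $u$ is nontrivial. Evaluating the integral equation at $t_{0}$ and using the triangle inequality,
\begin{equation*}
\|u\|_{\infty}=|u(t_{0})|\le\int_{a}^{b}|G(t_{0},s)|\,|q(s)|\,|u(s)|\,ds\le\Bigl(\max_{(t,s)\in[a,b]\times[a,b]}|G(t,s)|\Bigr)\,\|u\|_{\infty}\int_{a}^{b}|q(s)|\,ds .
\end{equation*}
Dividing by $\|u\|_{\infty}>0$ gives
\begin{equation*}
\int_{a}^{b}|q(s)|\,ds\ge\Bigl(\max_{(t,s)\in[a,b]\times[a,b]}|G(t,s)|\Bigr)^{-1}.
\end{equation*}

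It then remains to insert the two values of the maximum from Proposition \ref{Prop1}. If $b-a<\frac{2-\alpha}{\alpha-1}$, then by (\ref{25}) the maximum equals $2-\alpha$, which yields $\int_{a}^{b}|q(s)|\,ds\ge\frac{1}{2-\alpha}$; if $b-a\ge\frac{2-\alpha}{\alpha-1}$, then by (\ref{26}) the maximum equals $\frac{[(\alpha-1)(b-a)+(2-\alpha)]^{2}}{4(\alpha-1)(b-a)}$, which yields the second branch of (\ref{27}). Since $1<\alpha\le2$ (and in the first case one necessarily has $\alpha<2$, otherwise $\frac{2-\alpha}{\alpha-1}\le0$ contradicts $b-a>0$), both denominators are strictly positive, so the resulting lower bound is finite and nonzero for every admissible $[a,b]$ and $\alpha$ — precisely the defect in Theorem \ref{Th1} that this note repairs. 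I expect no real obstacle here: all the delicate work, namely locating where $|G|$ is maximized and distinguishing the two regimes according to whether $a+\frac{2-\alpha}{\alpha-1}$ lies in $[a,b]$, has already been done in Proposition \ref{Prop1}, and this step merely assembles those facts with the standard estimate.
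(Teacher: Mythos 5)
Your proposal is correct and is exactly the standard Lyapunov argument that the paper invokes without writing out (the author simply says the proof is ``well-known'' and reduces to combining the integral representation (\ref{int_eq}) with the two cases of Proposition \ref{Prop1}). Your write-up supplies the details of that same route --- sup-norm of the solution, triangle inequality, division by $\|u\|_{\infty}>0$, then substitution of (\ref{25}) or (\ref{26}) --- so there is nothing to add.
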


\begin{proof}
Since the proof is well-known so that the reader can easily check it on, 
where it's used in \cite{Kirane} but in here we get into details in two 
cases related the properties (\ref{25}) and (\ref{26}).
\end{proof}

By using Theorem \ref{Th2}, the reader can smoothly correct (Corrolary 3.4
and Corrolary 3.5 in \cite{Kirane}), but should seperate each of them in two
cases $b-a<$ $\frac{2-\alpha }{\alpha -1},$ and $b-a\geq $ $\frac{2-\alpha }{
\alpha -1}$.


\end{document}